\newtheorem{thm}{Theorem}[section]
\newtheorem{lem}[thm]{Lemma}
\newtheorem{cor}[thm]{Corollary}
\newtheorem*{dfn}{Definition}
\newtheorem*{que}{Question}
\newtheorem*{nota}{Notation}
\newcommand{\Z}{\mathbb{Z}}
\newcommand{\Q}{\mathbb{Q}}
\newcommand{\C}{\mathbb{C}}
\newcommand{\N}{\mathbb{N}}
\newcommand{\cod}{\textrm{cod}}
\newcommand{\code}{\mathcal{C}}
\title{Positive curvature and the elliptic genus}
\author{Nicolas Weisskopf}
\subjclass[2010]{53C20, 58J26}
\keywords{Positive curvature, elliptic genus, logarithmic symmetry rank.}
\thanks{The author was supported by the Swiss National Science Foundation project 200020\_149761.}
\address{Nicolas Weisskopf, D\'epartement de math\'ematiques, Universit\'e de Fribourg, Chemin du Mus\'ee 23, 1700 Fribourg, Switzerland}
\email{nic.weisskopf@gmail.com}
\begin{document}   

\begin{abstract}
We prove some results about the vanishing of the elliptic genus on positively curved Spin manifolds with logarithmic symmetry rank. The proofs are based on the rigidity of the elliptic genus and Kennard's improvement of the Connectedness Lemma for transversely intersecting, totally geodesic submanifolds.
\end{abstract}

\maketitle

\thispagestyle{empty} 

\section{Introduction}

Which manifolds are positively curved? This captivating question has intrigued geometers for more than a century and has been solved only in a few special cases. It turns out to be  difficult to  construct metrics of positive sectional curvature. 

Indeed, a glance at the literature confirms that all presently known, simply-connected, positively curved manifolds of dimension greater than twenty-four are the sphere, the complex projective space and the quaternionic projective space. This lack of examples indicates that positive  sectional curvature has a strong impact on the underlying topology and consequently, one aims to exhibit topological obstructions. In this note, we analyze the interplay between positive curvature and cobordism invariants and provide new results about the vanishing of the elliptic genus on positively curved Spin manifolds.

This approach dates back to the sixties. Recall that a genus in the sense of Hirzebruch is a ring homomorphism from the oriented cobordism ring to some unital algebra. Classical examples are the signature $sign(M)$ and the $\hat{A}$--genus $\hat{A}(M)$. In analytic terms, both the signature and the $\hat{A}$--genus can be seen as the index of a first-order elliptic differential operator. Here, the signature is the index of the square root of the Laplacian, whereas for Spin manifolds the $\hat{A}$--genus equals the index of the Dirac operator. These considerations culminated in the celebrated Atiyah-Singer Index Theorem.

Shortly after the Index Theorem was proven, Lichnerowicz \cite{Lic63} provided  an obstruction to positive scalar curvature on Spin manifolds. By using a Bochner-type formula, he showed that the $\hat{A}$--genus vanishes on Spin manifolds carrying positive scalar curvature. This implies, for example, that the $K3$--surface $V^4$ does not carry a metric of positive scalar curvature, since $V^4$ is Spin and $\hat{A}(V^4) \neq 0$.

In the years following Lichnerowicz's result, a combined effort of geometers and topologists led to the classification of simply-connected manifolds of dimension greater than four admitting a metric of positive scalar curvature -- a milestone in Riemannian Geometry. It was shown by Gromov-Lawson \cite{GL80}, Schoen-Yau \cite{SY79} and Stolz \cite{Sto92} that the $\hat{A}$--genus (and, more precisely, its KO--theoretic refinement, the \mbox{$\alpha$--invariant}) forms the only obstruction to positive scalar curvature on simply-connected Spin manifolds of dimension greater than four.

If one strengthens the curvature assumption to non-negative sectional curvature, then the signature also yields an obstruction. This follows from Gromov's Betti number Theorem. In \cite{Gro81}, Gromov proved that the total Betti number of a non-negatively curved manifold is bounded by a constant only depending on the dimension. Therefore, the signature is also bounded and so both classical genera are deeply related to curvature.

In the eighties, a new type of genus, the so-called elliptic genera, emerged from a discussion between topologists, number theorists and physicists \cite{Lan88}. The term elliptic originates from the fact that the logarithm of the genus corresponds to an elliptic integral. As example, we mention the universal elliptic genus, which can be thought of as the equivariant signature on the free loop space. The universal elliptic genus admits the expansion
$$ \phi_0(M^{4k}) = q^{-k/2} \cdot \hat{A}(M, \bigotimes_{\substack{n \; \textrm{odd} \\ n \geq 1}}^{\infty}\Lambda_{-q^n} T_{\mathbb{C}}M \otimes \bigotimes_{\substack{n \; \textrm{even} \\ n \geq 1}}^{\infty} S_{q^n} T_{\mathbb{C}}M),
$$
where each coefficient is a characteristic number.

In light of these new genera, one might again explore their connection to curvature. Indeed, it is fascinating to observe that this approach is still fruitful and has led to new exciting conjectures (see the Stolz conjecture \cite{Sto96}). In this context, Dessai raised the following question in {{\cite[\textit{Question 20, p.575}]{Des07}}}.

\begin{que}
 Let $(M^n,g)$ be a Spin manifold. Is the elliptic genus $\phi_0(M)$ constant as a power series, if $M$ admits a metric of positive sectional curvature?
\end{que}

In addition, Dessai gathered some evidence in (\cite{Des05}, \cite{Des07}) that favors a positive answer to this question. In particular, he showed that the coefficients of $\phi_0(M)$ vanish linearly with the symmetry rank. This question has many interesting facets. Not only does the vanishing of the coefficients give some information on the cobordism type, but it also paves a way to distinguish better between positive sectional curvature and positive Ricci curvature. As a matter of fact, there  are many Ricci positive Spin manifolds, whose elliptic genus is not constant.

This paper is centered around Dessai's question. The novelty of our approach lies in the fact that we obtain vanishing results under logarithmic symmetry ranks. The proofs are based on the rigidity of the elliptic genus and Kennard's Periodicity Theorem \cite{Ken13}, which is an improvement of Wilking's Connectedness Lemma \cite{Wil03} in the case of transversely intersecting, totally geodesic submanifolds.

From now on, a positively curved manifold stands for a manifold with positive sectional curvature.  We present our first result.

\begin{thm} \label{thmA} Let $(M^n,g)$ be a closed, positively curved Spin manifold. Suppose that a torus $T^s$ acts isometrically and effectively on $M^n$. Then, $M$ is rationally $4$--periodic or the first $\min \{ \left \lfloor \frac{n}{16} \right \rfloor +1, 2^{s-3} \}$ coefficients of $\phi_0(M)$ vanish.
\end{thm}

Note that a simply-connected, rationally $4$--periodic manifold $M^{4k}$ has the  same rational cohomology as a sphere, a complex or quaternionic projective space. We emphasize that the two statements on the cohomology ring and the elliptic genus are not  opposed. Instead, we rather believe that under the given curvature and symmetry assumptions, the elliptic genus of a rationally $4$--periodic manifold is constant. However, it is noteworthy that the cobordism type of a mani\-fold can in general not be detected via  cohomology. If we rephrase the statement  in terms of the symmetry rank, we observe that a logarithmic bound  becomes apparent. 

\begin{cor} \label{corA}
Let $(M^n,g)$ be a closed, positively curved Spin manifold. Suppose that $(M^n,g)$ has symmetry rank
$$ \textrm{symrank}(M^n,g) > \log_2(n)-1.
$$
Then, $M$ is rationally $4$--periodic or the first $ \left \lfloor \frac{n}{16} \right \rfloor +1$ coefficients of $\phi_0(M)$ vanish.
\end{cor}

Theorem \ref{thmA} indicates a certain trade-off between the cohomology and the elliptic genus. In fact, if we keep the same amount of symmetry and wish for a stronger vanishing result on the elliptic genus, then we can not recover the entire cohomology ring. The next result illustrates this aspect.

\begin{thm} \label{thmB} 
Let $(M^{n},g)$ be a closed, positively curved Spin manifold. Suppose that a torus $T^s$ acts isometrically and effectively on $M^n$. Then, one of the following statement holds:

\begin{enumerate}[1.)]

\item There is an element $x \in H^{4}(M; \Q)$ such that $x^{n/4} \in H^{n}(M; \Q)$ is a generator.

\item The first $\min \{ \left \lfloor \frac{n}{12} \right \rfloor +1, 2^{s-3} \}$ coefficients of $\phi_0(M)$ vanish.
\end{enumerate}
\end{thm}

Another interesting variant turns up, when we deal with a positively curved manifold $(M^n,g)$ with $b_4(M)=0$. In high dimensions, the known examples suggest that $M$ should resemble a sphere.  Moreover, a famous result by Smith states that the fixed point set of a smooth circle action on a sphere consists of a cohomology sphere and is therefore either connected or consists of two isolated fixed points. A similar configuration is studied in the next theorem.

\begin{thm} \label{thmC} 
Let $(M^n,g)$ be a closed and positively curved Spin manifold with $b_4(M)=0$. Suppose that $(M^n,g)$ has symmetry rank
$$ \textrm{symrank}(M^n,g) \geq \log_2(n)
$$
and that the torus fixed point set is connected. Then, the elliptic genus is constant.
\end{thm}

Amann and Kennard \cite[\textit{Theorem A}]{AK14a}  showed that under a symmetry rank of at least $\log_{4/3}(n-3)$ the torus fixed point set of a positively curved manifold $(M^n,g)$ with $b_4(M)=0$ is  a rational cohomology sphere. Combining this with our statement, we obtain the following interesting vanishing result.

\begin{cor} \label{corC} 
Let $(M^n,g)$ be a closed and positively curved Spin manifold with $b_4(M)=0$. Suppose that $(M^n,g)$ has symmetry rank
$$ \textrm{symrank}(M^n,g) > \log_{4/3}(n-3).
$$
Then, the elliptic genus vanishes.
\end{cor}

These three theorems reflect the  interplay between positive  curvature and the elliptic genus. For non-negative curvature, possible connections were explored by Herrmann and the author in \cite{HW14}. Before going any further, we sketch the proof strategy. The proof is a delicate combination of topological, geometric and symmetric arguments.  The key element is to characterize the fixed point set. 

On the topological side, we essentially make use of the rigidity property \cite{BT89} of the elliptic genus. This enables us to compute the elliptic genus in terms of the fixed point set of involutions. In fact, a result by Hirzebruch and Slodowy \cite{HS90} ensures the existence of high-dimensional fixed point components, if the elliptic genus does not vanish. This feature allows us to maintain control on the dimension of the fixed point set. On the geometric side, Frankel's Intersection Theorem \cite{Fra61} and Wilking's Connectedness Lemma \cite{Wil03} characterize the position and the topology of the fixed point components. Finally, Kennard's Periodicity Theorem \cite{Ken13} asserts that two fixed point components, which intersect transversely, give a periodicity on the level of cohomology.

As a consequence, if the elliptic genus does not vanish, it remains to find two fixed point components of high dimension, which intersect transversely. At this point, the symmetric properties set in.  Here, we choose the language of error-correcting codes to describe the symmetric  structure of involutions. We believe that this is a convenient way to gain a systematic overview of the various fixed point sets. Eventually, it turns out that a logarithmic symmetry rank guarantees the existence  of transversely intersecting fixed point components.  These considerations are presented in Lemma \ref{4per_fam}.

The paper is structured as follows. The next section summarizes the geometric and topological methods needed for the proofs. In particular, we recall some useful properties of totally geodesic submanifolds and discuss the rigidity of the elliptic genus. In section 3, we develop some elementary coding theory. The proofs of the theorems are given in section 4. 

\subsection*{Acknowledgments} The results in this paper are part of the author's doctoral thesis. It is a great pleasure for the author to thank his advisor Anand Dessai for introducing the subject to him and for many helpful discussions. Moreover, the author is grateful to Lee Kennard for useful comments on  a previous version. Finally, the author is thankful to Micha Wasem for a long mathematical friendship.

\section{Geometric and topological background}

Throughout this paper all manifolds are assumed to be closed, oriented and smooth. Furthermore, all actions are smooth. We begin with a short review on totally geodesic submanifolds and the elliptic genus. 

\subsection{Totally geodesic submanifolds} Let $(M^n,g)$ be a positively curved Riemannian manifold. A submanifold $N \subset M$ is called \textit{totally geodesic}, if any geodesic of $N$ with respect to the induced metric is also a geodesic of $M$. It is well-known that in the presence of symmetry, totally geodesic submanifolds arise naturally as fixed point sets. As a first theorem we  mention Frankel's Intersection Theorem.

\begin{thm}[Intersection Theorem {{\cite[\textit{Theorem 1, p.169}]{Fra61}}}] \label{Frankel} Let $(M^n,g)$ be a positively curved mani\-fold and let $N_1^{n_1}$ and $N_2^{n_2}$ be two connected, totally geodesic submanifolds. If $n_1 + n_2 \geq n$, then $N_1^{n_1}$ and $N_2^{n_2}$ intersect.
\end{thm}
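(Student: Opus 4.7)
The plan is to argue by contradiction via the second variation of arc length, which is the classical route to Frankel's theorem. Assume $N_1 \cap N_2 = \emptyset$. Since $M$ is closed, both $N_i$ are compact, so the distance $d(N_1, N_2)$ is realized by points $p_i \in N_i$ joined by a minimizing geodesic $\gamma \colon [0,L] \to M$. A first-variation argument applied to variations whose endpoint curves stay in $N_1$ and $N_2$ then forces $\gamma'(0) \perp T_{p_1} N_1$ and $\gamma'(L) \perp T_{p_2} N_2$.

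Next I would use a dimension count to produce a suitable variational field. Parallel transport $P$ along $\gamma$ is a linear isometry $T_{p_1} M \to T_{p_2} M$ sending $\gamma'(0)$ to $\gamma'(L)$, and hence sending $\gamma'(0)^\perp$ isometrically onto $\gamma'(L)^\perp$. Inside this $(n-1)$-dimensional hyperplane sit two subspaces: $P(T_{p_1} N_1)$ of dimension $n_1$, and $T_{p_2} N_2$ of dimension $n_2$. Since $n_1 + n_2 \geq n > n-1$, their intersection is nontrivial, yielding a nonzero $v \in T_{p_1} N_1$ whose parallel transport $P(v)$ lies in $T_{p_2} N_2$. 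Let $V(t)$ be the parallel extension of $v$ along $\gamma$; note that $V \perp \gamma'$ throughout, and that $V$ is nowhere zero.

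Finally, I would build a variation $\gamma_s$ of $\gamma$ with variational field $V$ and endpoint curves $\alpha_i(s) \subset N_i$, which is possible precisely because $V(0) \in T_{p_1} N_1$ and $V(L) \in T_{p_2} N_2$. Synge's second variation formula reads
$$L''(0) = \int_0^L \bigl(|V'|^2 - \langle R(V,\gamma')\gamma', V\rangle\bigr)\, dt + \textrm{(boundary terms)},$$
where the boundary terms are expressions in the second fundamental forms of $N_1$ and $N_2$ evaluated on $V(0)$ and $V(L)$. Parallelism forces $V' \equiv 0$, total geodesy annihilates the boundary terms, and the remaining curvature integrand is strictly negative by positive sectional curvature together with $V \perp \gamma'$ and $V \neq 0$. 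Hence $L''(0) < 0$, contradicting the minimality of $\gamma$.

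The step I expect to be the most delicate is the bookkeeping at the endpoints: one must check that the boundary contributions in Synge's formula are indeed the second fundamental forms of the $N_i$ evaluated on $V(0), V(L)$, so that total geodesy kills them. The rest—producing $v$ by the dimension count, extending it to a parallel field, and realizing it as the variational field of a family of curves with endpoints in the $N_i$ (for instance via $\exp_{\gamma(t)}(sV(t))$ composed with a small correction)—is essentially immediate.
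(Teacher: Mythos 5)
Your argument is correct and is precisely the classical second-variation proof from Frankel's original paper \cite{Fra61}, which the paper cites without reproducing. The dimension count via parallel transport, the construction of the parallel normal field $V$, and the vanishing of the boundary terms in Synge's formula by total geodesy are all standard and fit together exactly as you describe.
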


The topology of $(M^n,g)$ is strongly reflected in the topology of a totally geodesic submanifold as shown in Wilking's Connectedness Lemma.

\begin{thm}[Connectedness Lemma {{\cite[\textit{Theorem 2.1, p.263}]{Wil03}}}] \label{connectedness} Let $(M^n,g)$ be a positively curved manifold.
\begin{enumerate}[1.)]

\item If $N^{n-k} \subset M^n$ is a compact totally geodesic submanifold of $M$, then the inclusion $N \hookrightarrow M$ is $(n-2k+1)$--connected. 

\item If $N_1^{n-k_1}, N_2^{n-k_2} \subset M^n$ are two compact totally geodesic submanifolds \mbox{of $M$} with $k_1 \leq k_2$ and $k_1 + k_2 \leq n$, then the inclusion $N_1 \cap N_2 \hookrightarrow N_2$ is \mbox{$(n-k_1-k_2)$}--connected.
\end{enumerate}
\end{thm}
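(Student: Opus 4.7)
The plan is to use Morse theory on appropriate path spaces, with the energy functional playing the role of the Morse function. Throughout, the positive curvature assumption and the totally geodesic hypothesis combine to force critical geodesics connecting our submanifold(s) to have large Morse index, which in turn translates into high connectivity via the Morse inequalities for the path space.

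For part (a), I would work with the space $P_N = \{\gamma : [0,1] \to M \mid \gamma(0) \in N\}$ together with the endpoint evaluation $\mathrm{ev}_1 : P_N \to M$. This map is a fibration, and $P_N$ deformation retracts to $N$ via $\gamma \mapsto \gamma(0)$, so the long exact sequence of the fibration reduces the statement to showing that the fiber $P_N^p = \mathrm{ev}_1^{-1}(p)$ over any $p \in N$ is $(n-2k)$-connected. To prove this, I apply Morse theory to the energy functional $E(\gamma) = \int_0^1 |\dot\gamma|^2 \, dt$ on $P_N^p$. Critical points are geodesics from $N$ to $p$ meeting $N$ orthogonally at $\gamma(0)$, the constant path at $p$ being the minimum. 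For a non-constant critical geodesic $\gamma$, the second variation formula, together with the vanishing of the boundary second-fundamental-form term (because $N$ is totally geodesic), yields
\[
I(V,V) = \int_0^L \bigl(|V'|^2 - \langle R(V,\dot\gamma)\dot\gamma, V\rangle\bigr) \, dt
\]
for admissible $V$. Taking $V = f(t) W(t)$ with $W$ parallel, $W(0) \in T_{\gamma(0)} N \cap \dot\gamma(0)^{\perp}$ (which has dimension $\geq n-k-1$) and $f$ a suitable cutoff vanishing at both endpoints, positive curvature makes $I(V,V)$ strictly negative on a subspace of the expected size. Carefully counting dimensions yields $\mathrm{ind}(\gamma) \geq n-2k+1$, so the Morse inequalities give the desired connectivity.

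For part (b), I would run the same program on the space $\Omega(M;N_1,N_2) = \{\gamma : [0,1]\to M \mid \gamma(0) \in N_1,\, \gamma(1)\in N_2\}$. By Frankel's intersection theorem (applicable since $k_1+k_2 \leq n$), the intersection $N_1 \cap N_2$ is non-empty and coincides with the constant-path locus, i.e.\ the minimum set of $E$. Critical points are geodesics orthogonal to $N_1$ at $\gamma(0)$ and to $N_2$ at $\gamma(1)$. Both boundary terms in the second variation vanish by total geodesy, and I then look for parallel vector fields $W$ along $\gamma$ with $W(0)\in T_{\gamma(0)}N_1$ and $W(1)\in T_{\gamma(1)}N_2$; since parallel transport is a linear isomorphism of codimension-one orthogonal complements, and $\dim T N_1 + \dim T N_2 - (n-1) = n-k_1-k_2+1$, such $W$ span a subspace of dimension at least $n-k_1-k_2+1$ on which $I$ is negative definite. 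This bounds the index of every non-constant critical point from below by $n-k_1-k_2+1$, making $\Omega(M;N_1,N_2)$ $(n-k_1-k_2)$-connected relative to $N_1\cap N_2$. To pass from here to the inclusion $N_1\cap N_2 \hookrightarrow N_2$, I would exploit the fibration
\[
\Omega(M;N_1,N_2) \to N_2, \qquad \gamma \mapsto \gamma(1),
\]
whose fiber over $p \in N_2$ is a space of paths from $N_1$ to $p$ in $M$, combined with part (a) applied to $N_1 \subset M$ to control the connectivity of this fiber. The long exact sequence of this fibration then compares the connectivity of $N_1 \cap N_2 \hookrightarrow N_2$ with that of $N_1\cap N_2 \hookrightarrow \Omega(M;N_1,N_2)$, yielding the claim.

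The main obstacle is the index estimate for non-constant critical geodesics, and more precisely the bookkeeping that produces a subspace of test fields of the right dimension while simultaneously satisfying the Dirichlet-type boundary conditions at both endpoints. The condition $k_1+k_2\leq n$ enters precisely to guarantee a non-trivial intersection of the relevant subspaces under parallel transport; removing it would break both the existence of an intersection point (via Frankel) and the dimension count for the test fields. A subsidiary technical issue is handling the minimum set carefully: since the minima form the (possibly disconnected) submanifold $N$ (resp.\ $N_1\cap N_2$) rather than a single point, one needs a Morse–Bott version of the homotopy-type theorem for path spaces, which is standard but must be invoked with some care.
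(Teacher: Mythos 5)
The overall strategy — Morse theory on a path space, with the totally geodesic hypothesis killing the boundary term in the second variation and positive curvature forcing a large index — is indeed the right one and is exactly Wilking's. But your concrete implementation of part (a) has a real gap precisely where you flag the "main obstacle." You run Morse theory on the fiber $P_N^p$ of paths from $N$ to a fixed $p\in N$, so your critical geodesics carry a Neumann condition at $t=0$ ($V(0)\in T_{\gamma(0)}N$) but a Dirichlet condition at $t=L$ ($V(L)=0$). A parallel field $W$ with $W(0)\in T_{\gamma(0)}N$ is then \emph{not} admissible, because $W(L)\neq 0$, which forces you to introduce the cutoff $V=fW$ with $f(L)=0$. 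At that point the index form becomes $\int_0^L (f'^2-f^2 K(W,\dot\gamma))|W|^2\,dt$, and positivity of $K$ alone does not make this negative: for a geodesic shorter than the relevant conjugate radius the $f'^2$ term dominates for \emph{every} admissible $f$, and the argument produces no negative directions at all. Nothing in the setup rules out such short critical geodesics a priori, so "carefully counting dimensions yields $\mathrm{ind}(\gamma)\geq n-2k+1$" is an assertion, not a proof. The same issue then contaminates the final step of your part (b), which controls the fiber of $\Omega(M;N_1,N_2)\to N_2$ by invoking part (a).

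The fix, and the route Wilking actually takes, is to avoid the Dirichlet endpoint entirely: work with the space $\Omega(N,N)$ of paths from $N$ to $N$ with Neumann conditions at \emph{both} ends. Then for a non-constant critical geodesic $\gamma$ the pure parallel fields $V$ with $V(0)\in T_{\gamma(0)}N$ and $V(L)\in T_{\gamma(L)}N$ are admissible, and since parallel transport carries $T_{\gamma(0)}N$ and $T_{\gamma(L)}N$ — each of dimension $n-k$ — into the $(n-1)$-dimensional space $\dot\gamma(L)^{\perp}$, their intersection has dimension at least $n-2k+1$; on that subspace $I(V,V)=-\int K<0$ with no $f'^2$ term to fight. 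This gives $\pi_i(\Omega(N,N),N)=0$ for $i\leq n-2k$, and one transfers this to $\pi_{i+1}(M,N)=0$ via the relative fibration $(\Omega(N,M),\Omega(N,N))\to(M,N)$ together with the contractibility of the pair $(\Omega(N,M),N)$. In short: your index computation in part (b), where both endpoints lie on submanifolds, is exactly right and is the one you should also be using in part (a); the asymmetric path space you chose for (a) is the source of the unresolved index estimate.
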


We recall that a map $f: X \rightarrow Y$ is called $k$--connected, if the induced map on homotopy groups $f_{\ast}: \pi_i(X) \rightarrow \pi_i(Y)$ is an isomorphism for $i < k$ and an epimorphism for $i=k$.

Using Poincar\'e duality and the Hurewicz Theorem we note that a highly connected inclusion map $N^{n-k} \hookrightarrow M^n$ implies a periodicity on the integral cohomology ring of $M$. More precisely, we have

\begin{lem}[{{\cite[\textit{Lemma 2.2, p.264}]{Wil03}}}] \label{PD periodicity} Let $M^n$ and $N^{n-k}$ be two closed and oriented manifolds. If the inclusion $N^{n-k} \hookrightarrow M^n$ is $(n-k-l)$--connected with $n -k - 2l > 0$, then there exists \mbox{$e \in H^{k}(M; \Z)$} such that multiplication
$$\cup \; e: H^{i}(M; \Z) \rightarrow H^{i+k}(M; \Z) 
$$
is surjective for $l \leq i < n-k-l$ and injective for $l < i \leq n-k-l$.
\end{lem}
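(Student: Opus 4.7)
The plan is to define $e \in H^k(M;\mathbb{Z})$ to be the Poincaré dual of the submanifold $N$, so that $[M] \frown e = \iota_*[N]$, where $\iota : N \hookrightarrow M$ denotes the inclusion. Such a class exists and is uniquely determined because both $M$ and $N$ are closed and oriented. The strategy is then to identify the cup product $\cup e$, via Poincaré duality on $M$, with a simple composition built from $\iota^*$ and $\iota_*$, and to read off the claimed periodicity from the connectedness hypothesis.

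For the identification, I would apply the projection formula (naturality of the cap product): for $\alpha \in H^i(M;\mathbb{Z})$,
\[
[M] \frown (\alpha \cup e) = ([M] \frown e) \frown \alpha = \iota_*[N] \frown \alpha = \iota_*\bigl([N] \frown \iota^*\alpha\bigr).
\]
Hence the map $\cup e : H^i(M;\mathbb{Z}) \to H^{i+k}(M;\mathbb{Z})$ corresponds under $\mathrm{PD}_M$ to the composition
\[
H^i(M;\mathbb{Z}) \xrightarrow{\iota^*} H^i(N;\mathbb{Z}) \xrightarrow{\mathrm{PD}_N} H_{n-k-i}(N;\mathbb{Z}) \xrightarrow{\iota_*} H_{n-k-i}(M;\mathbb{Z}),
\]
and since $\mathrm{PD}_N$ is an isomorphism, $\cup e$ is surjective (respectively injective) precisely when $\iota^*$ and $\iota_*$ are so in the relevant degrees.

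Since $\iota$ is $(n-k-l)$--connected, the relative Hurewicz theorem gives $H_j(M, N;\mathbb{Z}) = 0$ for $j \leq n-k-l$, and the universal coefficient theorem yields the same vanishing for $H^j(M, N;\mathbb{Z})$. From the long exact sequences of the pair, $\iota_*$ is an isomorphism for $j < n-k-l$ and surjective for $j = n-k-l$, while $\iota^*$ is an isomorphism for $i < n-k-l$ and injective for $i = n-k-l$. Substituting these ranges into the composition, surjectivity of $\cup e$ at degree $i$ requires $l \leq i < n-k-l$, and injectivity requires $l < i \leq n-k-l$, matching the lemma. The main subtlety is carrying this out with integral coefficients; it works because both the $\mathrm{Hom}$ and $\mathrm{Ext}$ terms in the universal coefficient theorem vanish in the required range, so no degree is lost when passing from homology to cohomology.
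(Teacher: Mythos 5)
Your proof is correct. The paper does not actually prove this lemma but cites it from Wilking \cite{Wil03}; your argument---taking $e$ to be the Poincar\'e dual of $N$, using the projection formula to identify $\cup\,e$ with $\mathrm{PD}_M^{-1}\circ\iota_*\circ\mathrm{PD}_N\circ\iota^*$ (up to a sign $(-1)^{ik}$, which is harmless), and translating the $(n-k-l)$--connectedness of the inclusion into vanishing of $H_j(M,N;\Z)$ and $H^j(M,N;\Z)$ for $j\le n-k-l$---is exactly the standard proof used there.
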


In this lemma the pullback of the class $e \in H^{k}(M; \Z)$ via the inclusion map is the Euler class of the normal bundle of $N$ in $M$. Wilking \cite{Wil03} applied this lemma to derive structure theorems for positively curved manifolds with large torus actions.

The Connectedness Lemma is  powerful, when two totally geodesic submanifolds intersect transversely. Let $N_1^{n-k_1}$ and $N_2^{n-k_2}$ be such as in the second part of the Connectedness Lemma and suppose that $N_1$ and $N_2$ intersect transversely. By Lemma \ref{PD periodicity} there exists $e \in H^{k_1}(N_2; \Z)$ such that 
$$\cup \; e: H^{i}(N_2; \Z) \rightarrow H^{i+k_1}(N_2; \Z)
$$
is surjective for $0 \leq i < n-k_1-k_2$ and injective for $0 < i \leq n-k_1-k_2$. Hence, $e \in H^{k_1}(N_2; \Z)$ generates a periodicity on the entire ring of $H^{\ast}(N_2; \Z)$. This leads to the following definition, which was introduced by Kennard \cite{Ken13}.

\begin{dfn}
Let $R$ be a ring and let $M^n$ be an oriented manifold. The cohomology ring $H^{\ast}(M;R)$ is said to be \textit{$k$--periodic}, if there exists $e \in H^{k}(M; R)$ such that $\cup \; e: H^{i}(M; R) \rightarrow H^{i+k}(M; R)$ is surjective for $0 \leq i < n-k$ and injective for $0 < i \leq n-k$.
\end{dfn}

Kennard studied $k$--periodic manifolds and showed by means of the Steenrod power operations that integrally $k$--periodic manifolds are rationally 4--periodic. In combination with the Connectedness Lemma, Kennard proved the next result.

\begin{thm}[Periodicity Theorem {{\cite[\textit{Theorem 4.2, p.578}]{Ken13}}}] \label{periodicity theorem} Let $(M^n,g)$ be a simply-connected, positively curved mani\-fold. Let $N_1^{n-k_1}, N_2^{n-k_2} \subset M^n$ be totally geodesic submanifolds that intersect transversely. If $2k_1 + 2k_2 \leq n$, then $M$ is rationally $4$--periodic.
\end{thm}

If the dimension of $(M^n,g)$ is divisible by four, then it follows that the rational cohomology ring of a simply-connected, $4$--periodic manifold is generated by a single class. Hence, $H^{\ast}(M;\Q)$ is isomorphic to the rational cohomology ring of a sphere, a complex projective space or a quaternionic projective space.

The following lemma states that the periodicity is preserved by highly connected maps. It follows from the Connectedness Lemma and a similar version appears in the proof of the Periodicity Theorem (see the proof of the theorem's second part {{\cite[\textit{Theorem 4.2, p.578}]{Ken13}}}). Hence, we omit the proof of the next lemma.

\begin{lem} \label{key lemma}
Let $(M^n,g)$ be a closed, positively curved manifold of even dimension with $n \geq 8$. Let $N^{n-k} \subset M^n$ be a totally geodesic submanifold of even codimension with $k \leq \frac{n}{4}$. If $N$ is rationally $4$--periodic, then so is $M$.
\end{lem}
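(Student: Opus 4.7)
The plan is to lift the periodicity class from $N$ to $M$ through the highly connected inclusion $N \hookrightarrow M$ and then transfer periodicity by a diagram chase, filling in the remaining degrees with Poincaré duality on $M$.

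First I would observe that $k \leq n/4$ together with $n \geq 8$ forces $n - 2k + 1 \geq 5$, so by the Connectedness Lemma the restriction $r_j : H^j(M;\Q) \to H^j(N;\Q)$ is an isomorphism for $j < n - 2k + 1$ and injective for $j = n - 2k + 1$. Writing $x \in H^4(N;\Q)$ for the periodicity class of $N$, the isomorphism $r_4$ provides a unique lift $y \in H^4(M;\Q)$ with $r_4(y) = x$; this is my candidate periodicity class for $M$.

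Since $M$ is closed and oriented, Poincaré duality shows that $\cup y : H^i(M;\Q) \to H^{i+4}(M;\Q)$ is surjective if and only if $\cup y : H^{n-i-4}(M;\Q) \to H^{n-i}(M;\Q)$ is injective, so it suffices to establish surjectivity of $\cup y$ on $H^i(M;\Q)$ for all $0 \leq i \leq n - 5$. This breaks naturally into two overlapping ranges. In the direct range $0 \leq i \leq n - 2k - 3$, both $r_i$ is surjective and $r_{i+4}$ is injective, so a standard diagram chase using surjectivity of $\cup x$ on $H^i(N;\Q)$ transfers the surjectivity of cup product from $N$ to $M$. For the dual range $n - 2k - 2 \leq i \leq n - 5$, I would instead prove injectivity of $\cup y$ on $H^{n-i-4}(M;\Q)$ by the same pullback argument, now using injectivity of $r_{n-i-4}$ and of $\cup x$ on $H^{n-i-4}(N;\Q)$, and then invoke Poincaré duality to convert this injectivity into the desired surjectivity.

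The main thing to check is that these two ranges cover $[0, n-5]$ and that the relevant hypotheses on $\cup x$ in $N$ hold in the degrees used; this is where the hypothesis $k \leq n/4$ is essential, together with $n \geq 8$ and $k$ even. The only tight case is $n = 8, k = 2$, where the ranges barely meet and one relies on $\cup x$ being an isomorphism on $H^1(N;\Q)$; apart from this bookkeeping the argument is essentially immediate from the Connectedness Lemma and Poincaré duality, which is consistent with the author's remark that the proof is a direct consequence of what precedes.
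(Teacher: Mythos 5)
Your argument is correct and is precisely the proof the paper leaves to the reader with its pointer to Kennard: lift the period $x\in H^4(N;\Q)$ through the $(n-2k+1)$-connected inclusion $N\hookrightarrow M$ supplied by the Connectedness Lemma, establish surjectivity of $\cup\,y$ on $H^i(M;\Q)$ for $0\le i\le n-2k-3$ by a diagram chase against $\cup\,x$ on $N$, and obtain the remaining degrees by proving injectivity of $\cup\,y$ in the dual low degrees $1\le j\le 2k-2$ and invoking Poincaré duality. Your range bookkeeping is right: with $k$ even, $k\le n/4$ and $n\ge 8$, the two ranges exactly cover $[0,n-5]$ and every degree used lies within the periodicity range of the $(n-k)$-dimensional manifold $N$.
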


\subsection{Elliptic genus} We move on to the topological part of this paper. We  give a brief description of the elliptic genus and mention its different expansions as well as its rigidity properties with regard to compact Lie group actions. For an introduction to the subject, the reader is referred to  \cite{HBJ92}, \cite{HS90} and \cite{Lan88}.

A genus in the sense of Hirzebruch is a ring homomorphism from the oriented cobordism ring $\Omega_{\textrm{SO}}^\ast \otimes \Q$ to a commutative, unital $\Q$-algebra $R$. As examples, we mention the signature and the $\hat{A}$--genus. As these examples suggest, genera often arise in the context of Index Theory. The Hirzebruch formalism describes a correspondence between genera and power series $Q(x)$ with coefficients in $R$. The \textit{elliptic genus} $\phi(M)$ is the genus associated to the power series $Q(x) = x/f(x)$ with
\begin{equation*}
f(x) = \frac{1-e^{-x}}{1+e^{-x}} \prod_{n=1}^{\infty} \frac{1-q^ne^{-x}}{1+ q^ne^{-x}} \cdot \frac{1-q^n e^{x}}{1+q^n e^{x}}.
\end{equation*}
Since the function $f(x)$ is attached to a certain lattice, this yields a close relation between the elliptic genus $\phi(M)$ and the theory of modular forms. For instance, we note that for $n = 0 \pmod 8$ the elliptic genus $\phi(M^n)$ is a modular function for the subgroup $\Gamma_0(2) \subset \textrm{SL}_2(\Z)$.

According to Witten \cite{Wit88}, the elliptic genus $\phi(M)$ admits a remarkable interpretation. It can be thought of as the equivariant signature of the free loop space $\mathcal{L}M$ with respect to the natural circle action on $\mathcal{L}M$. We recall that $S^1$ acts on the loop space by reparametrizing the loops. This approach yields the following power series for the elliptic genus
\begin{eqnarray} \label{sign desc}
\phi(M) &=& sign(M, \bigotimes_{n=1}^{\infty}S_{q^n}T_\C M \otimes \bigotimes_{n=1}^{\infty}\Lambda_{q^n}T_\C M) \\
&=& sign(M) + 2 \; sign(M, T_\C M)\cdot q + \ldots, \nonumber
\end{eqnarray} 
where  $T_\C M$ denotes the complexified tangent bundle and 
$$S_{t}T_\C M = \sum_{i=0}^{\infty} S^{i}T_\C M \cdot t^{i} \quad \textrm{and} \quad \Lambda_{t}T_\C M = \sum_{i=0}^{\infty} \Lambda^{i}T_\C M \cdot t^{i}.
$$

It follows that in the cusp given by $q=0$ the elliptic genus equals the signature. In the other cusp, $\phi(M)$ can be described in terms of twisted $\hat{A}$-genera. More precisely, $\phi(M)$ has the following $q$-development
\begin{eqnarray} \label{spin desc}
\phi_0(M^{4k}) &=& q^{-k/2} \cdot \hat{A}(M, \bigotimes_{\substack{n \geq 1 \\ n \; \textrm{odd}}}^{\infty}\Lambda_{-q^n} T_{\mathbb{C}}M \otimes \bigotimes_{\substack{n \geq 1 \\ n \; \textrm{even}}}^{\infty} S_{q^n} T_{\mathbb{C}}M)\\
&=& q^{-k/2}\cdot(\hat{A}(M) - \hat{A}(M,T_{\mathbb{C}}M)\cdot q \; \pm \ldots). \nonumber
\end{eqnarray} 

If $M$ is Spin, then $\hat{A}(M, W)$ can be geometrically seen as the index of the Dirac operator twisted with some complex vector bundle $W$. In this case, the coefficients of the power series (\ref{spin desc}) are integers. We conclude that (\ref{sign desc}) and (\ref{spin desc}) reveal a wonderful connection between Spin and signature geometry.

\subsection{Rigidity property} We turn our attention to the equivariant setting. Let $G$ be a compact, connected Lie group acting on a Spin manifold $M$.  Then, the associated vector bundles in (\ref{sign desc}) become $G$-bundles and the elliptic genus $\phi(M)$ refines to an equivariant genus $\phi(M)_g$ depending on $g\in G$. However, Bott and Taubes \cite{BT89} proved  that the elliptic genus is \textit{rigid}, i.e. 
$$\phi(M)_g = \phi(M), \quad \forall g \in G.
$$

Suppose now that $S^1$ acts on a Spin manifold $M$ and let $\sigma \in S^1$ be the non-trivial involution. Using the rigidity property and the Lefschetz fixed point formula  of Atiyah-Segal-Singer and Bott, it was shown by Hirzebruch and Slodowy \cite{HS90} that
$$ \phi(M) = \phi(M)_{\sigma} = \phi(M^{\sigma} \circ M^{\sigma}),
$$
where $M^{\sigma} \circ M^{\sigma}$ denotes the transverse self-intersection of the fixed point manifold $M^{\sigma}$. By studying the order of the pole in expression (\ref{spin desc}), Hirzebruch and Slodowy deduced the following result, which will play a crucial role in our arguments.

\begin{thm}[{{\cite[\textit{Corollary, p.317}]{HS90}}}] \label{thm_HS} Let $S^1$ act on a Spin manifold $M^{4k}$. If the action is odd, then $\phi(M) =0$. If the action is even and codim $M^{\sigma} > 4r$, then the first $(r+1)$ coefficients of $\phi_0(M)$ vanish.
\end{thm}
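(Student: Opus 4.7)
The plan is to combine the Bott--Taubes rigidity of the elliptic genus with the Atiyah--Segal--Singer--Bott Lefschetz fixed point formula and then to read off both conclusions from the $q$-expansion (\ref{spin desc}). Since $\phi(M)_g = \phi(M)$ for every $g \in S^1$, we may evaluate at the non-trivial involution $\sigma$: this expresses $\phi(M)$ as a sum of local contributions, one for each connected component $F$ of $M^\sigma$, each depending on the characteristic classes of $F$ and on the $(-1)$-action of $\sigma$ on the normal bundle $\nu_F$. This recovers the identity $\phi(M)=\phi(M^\sigma\circ M^\sigma)$ already mentioned just before the theorem.

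The next step is to develop each local contribution as a formal Laurent series in $q^{1/2}$, using the explicit $q$-product for $f(x)$ appearing in $Q(x)=x/f(x)$. The Chern roots of $TF$ enter the numerators and the $(-1)$-shifted Chern roots of $\nu_F$ enter the denominators; substituting into the factors
\begin{equation*}
\frac{1-q^n e^{-x}}{1+q^n e^{-x}} \cdot \frac{1-q^n e^{x}}{1+q^n e^{x}}
\end{equation*}
produces an expansion whose leading $q$-order is determined by the codimension $c$ of $F$. A direct computation shows that in the even case, when the $S^1$-action lifts to the spin structure, a codimension-$c$ component contributes a series whose leading $q$-power exceeds that of $\phi_0(M)$ by an amount growing with $c$, sharp enough so that $c>4r$ forces the contribution to start at or after $q^{-k/2+r+1}$. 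Matching against $\phi_0(M)=q^{-k/2}(a_0+a_1q+\ldots)$ then forces $a_0=\ldots=a_r=0$, which is the second claim.

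For the odd case, the $S^1$-action does not lift to the spin bundle, and the Lefschetz character at $\sigma$ then picks up an extra half-integer shift in the $q$-exponent of each local contribution. The fixed-point side therefore only contains $q$-exponents lying in a different residue class modulo $\Z$ than the integer-spaced exponents of $\phi_0(M)$. Since the two sides are equal by rigidity, both must vanish identically, giving $\phi(M)=0$. The main obstacle is the clean pole-order bookkeeping for each factor of $f(x)$ around $q=0$: one must carefully track how each pair of Chern roots $\pm x_i$ of $\nu_F$ combines with the $\sigma$-twist to yield the correct leading power of $q^{1/2}$, verifying sharpness in the even case and the half-integer shift in the odd case.
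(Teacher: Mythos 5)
This theorem is cited from Hirzebruch--Slodowy \cite{HS90}; the paper does not give its own proof, only the two-sentence outline immediately preceding the statement (rigidity plus the Lefschetz fixed point formula give $\phi(M)=\phi(M^{\sigma}\circ M^{\sigma})$, and one then reads off the pole order in (\ref{spin desc})). Your sketch reproduces exactly that argument: since $\dim(M^{\sigma}\circ M^{\sigma}) = 4k - 2\,\mathrm{codim}\,M^{\sigma}$, each component of codimension $c$ contributes a series starting at $q^{-k/2 + c/4}$; for an even action $c\equiv 0\pmod 4$, and $c>4r$ forces $c/4\geq r+1$, killing the first $r+1$ coefficients of $\phi_0(M)$, while for an odd action $c\equiv 2\pmod 4$, so the exponents lie in $-k/2+\tfrac12+\Z$, disjoint from the integer-spaced exponents of $\phi_0(M)$, forcing $\phi(M)=0$. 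The one point worth making explicit is that your "extra half-integer shift" in the odd case is precisely the statement that every fixed component of $\sigma$ then has codimension $\equiv 2\pmod 4$; with that noted, your argument is correct and coincides with the \cite{HS90} proof the paper invokes.
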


We recall that a circle action is called \textit{even}, if the circle action lifts to the Spin structure. If this is not the case, then the action is called \textit{odd}. Hirzebruch and Slodowy  used this theorem to show that any Spin homogeneous space has constant elliptic genus.

\section{Elementary coding theory}

In this section we turn our attention to symmetry. In order to analyze the action of involutions, we make use of the theory of error-correcting codes. More precisely, we attach to a given torus fixed point a binary linear code reflecting the symmetry around this fixed point. In particular, the code measures both the size and the position of the various fixed point components around the fixed point. This approach has often been used  in the study of large group actions on smooth manifolds such as in \cite{Wil03}. We explain some elementary coding theory here to have a systematic presentation of the symmetry. For an introduction to coding theory we refer to the book \cite{HP03}.

\subsection{Linear codes} We consider the finite dimensional vector space $\Z_2^n$. A \textit{binary $[n,k]$--linear code} $\mathcal{C}$ of length $n$ and rank $k$ is a $k$--dimensional subspace of $\Z_2^n$. A linear code $\mathcal{C}$ may be presented by a \textit{generating matrix} $G$, where the rows of $G$ form a basis of the code $\mathcal{C}$. Moreover, the code $\mathcal{C}$ comes with the \textit{Hamming distance} function defined by
$$ d(x,y) := \textrm{ord} (\{ i \; \vert \; x_i \neq y_i \}) \quad \textrm{for} \; x,y \in \mathcal{C}.
$$ 
We define the \textit{weight} $\textrm{wt}(x)$ of a codeword $x\in \mathcal{C}$ to be the number of coordinates that are non-zero, i.e. $\textrm{wt}(x) = d(x,0)$. Eventually, we define the \textit{minimum} and the \textit{maximum distance} of a code $\mathcal{C}$ to be
$$ d_{\textrm{min}}(\mathcal{C}) = \min_{ x \in \mathcal{C}^{\ast} } \; \mathrm{wt}(x) \quad \textrm{and} \quad d_{\textrm{max}}(\mathcal{C}) = \max_{x \in \mathcal{C}^{\ast}} \; \mathrm{wt}(x).
$$
\indent We shall now derive some properties of the maximum distance by using the construction of the residual code. Let $\mathcal{C}$ be an $[n,k]$--linear code and let $d_{\textrm{max}}(\mathcal{C})$ be the maximum distance. Let $x \in \mathcal{C}$ be a codeword that realizes the maximum distance. We construct the residual code $\mathcal{C}^{\textrm{res}}(x)$  with respect to $x \in \mathcal{C}$ in the following manner.

First, we choose a generating matrix $G$  of $\mathcal{C}$, whose first row corresponds to $x \in \mathcal{C}$. Then, we permute the non-zero entries of $x \in \mathcal{C}$ to the front and write down the new generating matrix
$$
\left( \begin{array}{c|c}
1 \; 1 \ldots 1 & 0 \; 0 \ldots 0 \\
\hline
\rule{0pt}{2.8ex} 
G' & G''
\end{array} \right)
$$
for the code $\mathcal{C}$.

\begin{dfn}
In the construction above, we define the \textit{residual code} $\mathcal{C}^{\textrm{res}}(x)$ \textit{with respect to} $x \in \mathcal{C}$ to be the linear code generated by the matrix $G''$.
\end{dfn}

It follows from the construction that the residual code $\mathcal{C}^{\textrm{res}}(x)$ is a linear code of length $n- \textrm{wt}(x)$ and is of rank at most $n-1$. For our considerations, it is important to note that any codeword in the residual code $\mathcal{C}^{\textrm{res}}(x)$ comes initially from a codeword in $\mathcal{C}$. Our definition of the residual code agrees with the standard definition \cite{HP03}, but where we fixed $x \in \mathcal{C}$ to have maximum distance. We prove a simple estimate for the maximum distance of this new code. It is merely a variant of the Griesmer step, which is used in the proof of the Griesmer bound.

\begin{lem} \label{dmax}
Let $\mathcal{C}$ be a binary linear code. Then, the maximum distance of the residual code with respect to $x \in \mathcal{C}$ satisfies $ d_{\textrm{max}}(\mathcal{C}^{\textrm{res}}(x)) \leq \lfloor d_{\textrm{max}}(\mathcal{C}) /2 \rfloor$.
\end{lem}

\begin{proof}
Let $y \in \mathcal{C}$ be a codeword.  We decompose the codeword with respect to $G'$ and $G''$ and denote the splitting by $y = (y_1 \; \vert \; y_2)$. By construction, we have $y_2 \in \mathcal{C}^{\textrm{res}}(x)$ and we need to show that
$$ \textrm{wt}(y_2) \leq \left \lfloor \frac{d_{\textrm{max}}(\mathcal{C})}{2} \right \rfloor.
$$
Since $y \in \mathcal{C}$, we observe that
$$  \textrm{wt}(y_1)+ \textrm{wt}(y_2) \leq d_{\textrm{max}}(\mathcal{C}).
$$
Moreover, the weight of the codeword $x+y \in \mathcal{C}$ implies
$$ d_{\textrm{max}}(\mathcal{C}) - \textrm{wt}(y_1) + \textrm{wt}(y_2) \leq d_{\textrm{max}}(\mathcal{C}).
$$
We add up these two inequalities to obtain
$$\textrm{wt}(y_2) \leq \left \lfloor \frac{d_{\textrm{max}}(\mathcal{C})}{2} \right \rfloor,
$$
which is the claim.
\end{proof}

The construction of the residual code can be iterated. Let $\mathcal{C}$ be a linear code of length $n$ and let $\mathcal{C}^{\textrm{res}}(x_1, \ldots, x_{k-1})$ be the residual code with respect to the codewords $x_i \in \mathcal{C}^{\textrm{res}}(x_1, \ldots, x_{i-1})$ for $2 \leq i \leq k-1$. We choose a codeword $x_k \in  \mathcal{C}^{\textrm{res}}(x_1, \ldots, x_{k-1})$ of maximal weight and construct the new residual code $\mathcal{C}^{\textrm{res}}(x_1, \ldots, x_{k})$ as above. This code is of length $n- \sum_{i=1}^k \textrm{wt}(x_i)$ and by  \mbox{Lemma \ref{dmax}}, we note that
$$ d_{\textrm{max}}(\mathcal{C}^{\textrm{res}}(x_1, \ldots, x_{k})) \leq \left \lfloor \frac{d_{\textrm{max}}(\mathcal{C})}{2^k} \right \rfloor .
$$
We summarize this observation into the following lemma.

\begin{lem} \label{iter_res_cod}
Let $\mathcal{C}$ be a binary linear code of length $n$ and  of maximum distance $d_{\textrm{max}}(\mathcal{C}) \leq \frac{n}{2}$ and $d_{\textrm{max}}(\mathcal{C}) < 2^{k}$. Then, $\mathcal{C}^{\textrm{res}}(x_1, \ldots, x_{k})$ is the trivial code of length $n- \sum_{i=1}^k \textrm{wt}(x_i) >0$.
\end{lem}

\subsection{Group actions on manifolds} For the remainder of this section, we will discuss the implementation of coding theory into the framework of positively curved manifolds with symmetry. The aim of this approach is to gain an overview on the position of the fixed point sets and to keep track of the dimensions.

Let $(M^{2n},g)$  be an oriented, positively curved manifold of even dimension. Suppose that a torus $T^s$ acts isometrically and effectively on $M$ and let $\textit{pt} \in M^T$ denote a torus fixed point.  As such, the torus acts linearly on the tangent space $T_{\textit{pt}}M$ and hence, we obtain the isotropy representation
$$ \hat{\rho}: T^s \longrightarrow \textrm{SO}(T_{\textit{pt}}M).
$$
Next, we pass to the subgroup of involutions $\Z_2^s \subset T^s$ and we note that $\hat{\rho}$ induces an embedding
$$ \rho: \Z_2^s\longrightarrow \Z_2^n.
$$
Thus, we attach the binary linear code $\mathcal{C} := \rho(\Z_2^s)$ to the fixed point $\textit{pt}\in M^T$. Since the torus action is effective, the induced homomorphism $\rho$ is injective. Therefore, we conclude that $\mathcal{C}$ is an $[n,s]$--linear code.

\begin{dfn}
In the above situation, we consider an involution $\sigma \in T^s$. The image $\tilde{\sigma} := \rho(\sigma) \in \mathcal{C}$ is called the \textit{codeword associated to the involution} $\sigma \in T^s$.
\end{dfn}

The resulting code $\mathcal{C}$ captures, in particular, the dimension of the fixed point set of involutions. Let $\sigma \in T^s$ be an involution and let $F(\sigma) \subset M^{\sigma}$ be the fixed point component containing $\textit{pt} \in M^{T}$.  Then, we recognize that the weight $\textrm{wt}(\tilde{\sigma})$ of the associated codeword measures exactly one half of the codimension of $F(\sigma)$ in $M$. Moreover, the dimension of the intersection of various fixed point components around $\textit{pt} \in M^T$ can be easily read off from the code.

\section{Proofs of the results}

\subsection{Proofs of Theorems \ref{thmA} and \ref{thmB}}

This subsection deals with the proofs of Theorem \ref{thmA} and \ref{thmB}. In both proofs, we examine closely the fixed point components of the involutions in $T^s$. The main idea is that, if these totally geodesic submanifolds have high codimension, then the coefficients of $\phi_0(M)$ vanish, whereas, if the codimensions are low, we are able to compute the rational cohomology ring of $M$. Before we begin the proof, we fix the following notations.

\begin{nota}
Let $N \subset M$ be a connected submanifold. Then $\cod_{M} N$ denotes the codimension of $N$ in $M$.
\end{nota}

\begin{nota}
Suppose that a torus $T^s$ acts smoothly on a manifold $(M^n,g)$ and let $\sigma \in T^s$ be an involution. We fix a torus fixed point $\textit{pt} \in M^T$ and set $F(\sigma)$ to be the fixed point component of $M^{\sigma}$ containing $\textit{pt} \in M^T$. Furthermore, let $\sigma_1, \sigma_2 \in T^s$ be two involutions. We consider the induced action of $\sigma_2$ on $F(\sigma_1)$ and write $F(\langle \sigma_1, \sigma_2 \rangle)$ for the fixed point set at $\textit{pt}$. We notice that $F(\langle \sigma_1, \sigma_2 \rangle)$ is the component of the intersection \mbox{$F(\sigma_1) \cap F(\sigma_2)$} at $\textit{pt}$.
\end{nota}

We now state the key technical lemma. Under certain assumptions on the torus action, we construct a chain of submanifolds of decreasing codimensions. These submanifolds are obtained through the intersection of fixed point sets of involutions. In the construction of this chain, we also come across submanifolds that intersect transversely and therefore, we are able to detect a periodicity in the cohomology.  At this point, we  use the coding-theoretic interpretation of symmetry.

\begin{lem} \label{4per_fam}
Let $(M^n,g)$ be an even-dimensional, closed, oriented, positively curved manifold and let $s \geq 2$. Suppose that a torus $T^{s-1}$ acts isometrically and effectively on $M^n$ and let $ \textit{pt} \in M^T$ be a torus fixed point. Suppose that $\cod_M F(\sigma) \leq \frac{n}{2}$ and $\cod_M F(\sigma) < 2^{s-1}$ for all involutions $\sigma \in T^{s-1}$. Then, we have the following statements:
\begin{enumerate}[1.)]
\item There exists a family of totally geodesic submanifolds
\begin{equation*}
N_{s-1} \subseteq N_{s-2} \subseteq \ldots \subseteq N_{1} \subset N_0 = M
\end{equation*} 
such that  $\cod_{N_i} N_{i+1} \leq \frac{1}{2} \; \cod_{N_{i-1}}N_i$ for all $1 \leq i \leq s-2$ and $\textrm{dim} \; N_{s-1} > 0$.
\vspace{0.15cm}

\item In the family above, we suppose that $\cod_{N_1} M = k$. Then, each inclusion map $N_{i+1} \hookrightarrow N_i$ is at least $(n-2k+1)$--connected.

\vspace{0.15cm}
\item In the family above, there exists $r \in \{ 0, \ldots, s-1 \}$ such that $N_r$ is rationally \mbox{$4$--periodic}.
\end{enumerate}
\end{lem}

\begin{proof}
\textit{Ad 1.}: First, we translate the symmetry into an error-correcting code and afterwards, we make use of the residual code to construct a family of submanifolds with the desired properties.

The isotropy representation at the point $\textit{pt}\in M^T$ yields a linear code $\code$ of length $\frac{n}{2}$ and of rank $s-1$.  Starting with this code $\code$, we construct a residual code $\code^{\textrm{res}}(\tilde{\tau}_1, \ldots, \tilde{\tau}_{s-1})$ and recall that each codeword in $\code^{\textrm{res}}(\tilde{\tau}_1, \ldots, \tilde{\tau}_{i})$ comes, by construction, from a codeword in $\mathcal{C}$. We then define the submanifolds $N_i$ in correspondence to these codewords.

First, we assume that the codewords $\tilde{\tau}_{i+1} \in \code^{\textrm{res}}(\tilde{\tau}_1, \ldots, \tilde{\tau}_{i})$ are non-zero.  Let $\sigma_1 \in T^{s-1}$ be the involution associated to the codeword $\tilde{\tau}_1 \in \code$. We put
$$ N_1 = F(\sigma_1)
$$
and consider then the induced torus action on $N_1$. In the next step, we choose the involution $\sigma_2 \in T^{s-1}$ such that the associated codeword $\tilde{\sigma}_2 \in \mathcal{C}$ gives the codeword $\tilde{\tau}_2   \in \code^{\textrm{res}}(\tilde{\tau}_1)$. We set
$$ N_2 = F(\langle \sigma_1, \sigma_2 \rangle)
$$
and from here, we proceed inductively. Let $N_i = F(\langle \sigma_1, \ldots, \sigma_i \rangle)$ be given. We select the involution $\sigma_{i+1} \in T^{s-1}$ such that the associated codeword $\tilde{\sigma}_{i+1} \in \mathcal{C}$ gives the codeword $\tilde{\tau}_{i+1} \in \code^{\textrm{res}}(\tilde{\tau}_1, \ldots, \tilde{\tau}_{i})$ and we define
$$ N_{i+1} = F(\langle \sigma_1, \ldots, \sigma_{i+1} \rangle).
$$
Since the initial code $\code$ has rank $s-1$,  this construction stops at the submanifold $N_{s-1}$. Therefore, we have built a family  of totally geodesic submanifolds.

Now let $\tilde{\tau}_{i+1} \in \code^{\textrm{res}}(\tilde{\tau}_1, \ldots, \tilde{\tau}_{i})$ be the first codeword among $(\tilde{\tau}_1, \tilde{\tau}_2, \ldots, \tilde{\tau}_{s-1})$ that is trivial and, as above, let $N_i = F(\langle \sigma_1, \ldots, \sigma_i \rangle)$ be given. Then, we complete our family by putting
$$ N_{s-1} = N_{s-2} = \ldots = N_{i+1} = N_i.
$$

By Lemma \ref{dmax}, the codimensions  decrease by a factor of at least one half yielding
$$ \cod_{N_i} N_{i+1} \leq \frac{1}{2} \; \cod_{N_{i-1}}N_i \quad \textrm{for} \quad  1 \leq i \leq s-2.
$$
Moreover, since  $\cod_M F(\sigma) \leq \frac{n}{2}$, we remark that $\dim N_{s-1} >0$.
\medskip

\textit{Ad 2.}: The proof is a consequence of the Connectedness Lemma. We recall that by construction the codimensions decrease by a factor of at least one half
$$ \cod_{N_i} N_{i+1} \leq \frac{1}{2} \; \cod_{N_{i-1}}N_i. \
$$
We consider the map $N_{i+1} \hookrightarrow N_i$ for some $i \in \{ 1, \ldots, s-2 \}$. By our construction, we observe that 
\begin{eqnarray*}
\dim N_i - 2 \cdot \cod_{N_i} N_{i+1} +1 &=& \dim N_{i-1} - \cod_{N_{i-1}} N_{i} - 2 \cdot \cod_{N_i} N_{i+1} +1 \\
&\geq & \dim N_{i-1} - 2 \cdot \cod_{N_{i-1}} N_{i} +1.
\end{eqnarray*}
It follows from the Connectedness Lemma that the inclusion map $N_{i+1} \hookrightarrow N_i$ is as connected as the map $N_{i-1} \hookrightarrow N_i$. In other words, the inclusion maps in our family preserve the connectivity. Since $N_1 \hookrightarrow M$ is $(n-2k+1)$--connected, the claim follows directly.
\medskip

\textit{Ad 3.}: The strategy is to show that the induced action on the submanifolds $N_i$ becomes ineffective at some stage.  Consequently, this will lead to a rationally $4$--periodic manifold in the family.

We proceed via contradiction and assume that the submanifold $N_i$ is not rationally $4$--periodic for any $i \in \{ 0, \ldots, s-1 \}$. Let $\Z_2^{s-1} \subset T^{s-1}$ be the subgroup of involutions.

First, we show that there exists a group $\Z_2^{s-2}$ acting  effectively on $N_1 =F(\sigma_1)$. In fact, if the kernel of the $\Z_2^{s-1}$--action on $N_1$ has rank greater than one, then we have  non-trivial involutions $\rho, \sigma_1 \in \Z_2^{s-1}$ acting trivially on $N_1$. Therefore, the components $F(\rho)$ and $F(\rho \cdot \sigma_1)$ intersect transversely at $\textit{pt} \in M^T$, the intersection being $F(\sigma_1)$. By assumption, the codimensions satisfy
$$ 2 \cdot  \cod_{M} F(\rho \cdot \sigma_1) + 2 \cdot  \cod_{M} F(\rho) = 2 \cdot \cod_{M} F(\sigma_1) \leq n
$$
and therefore, $M$ would be rationally $4$--periodic by Theorem \ref{periodicity theorem}. However, this leads to a contradiction and hence, we have an effective $\Z_2^{s-2}$--action on $N_1$.

In the same way, we show by an inductive argument that there is a group $\Z_2^{s-i-1}$ acting effectively on $N_i$ for $i \in \{ 1, \ldots, s-2 \}$. In fact, the codimensions are small enough
$$ 2 \cdot \cod_{N_i} N_{i+1} \leq \dim N_i
$$
so that we can use Theorem \ref{periodicity theorem}, when the kernel of the induced $\Z_2^{s-i-1}$--action from $N_i$ restricted to $N_{i+1}$ has rank greater than one. Therefore, we obtain an effective $\Z_2$--action on $N_{s-2}$.

If we rephrase the last statement in terms of coding theory, we note that the residual code $\code^{\textrm{res}}(\tilde{\tau}_1, \ldots, \tilde{\tau}_{s-2})$ is non-trivial. By assumption, we have \mbox{$d_{\max}(\code) < 2^{s-2}$} and so it follows by Lemma \ref{iter_res_cod} that the  code $\code^{\textrm{res}}(\tilde{\tau}_1, \ldots, \tilde{\tau}_{s-2})$ is trivial. Hence, the $\Z_2$--action on $N_{s-2}$ is trivial and we have established a contradiction. This implies that there is a $r \in \{ 0, \ldots, s-1 \}$ such that the manifold $N_{r}$ is rationally $4$--periodic.
\end{proof}

We are now ready to prove Theorem \ref{thmA}. Roughly, the idea goes as follows. The non-vanishing of the elliptic genus imposes restrictions on the fixed point set of involutions. It turns out that the fixed point configurations are compatible with the assumptions of Lemma \ref{4per_fam}. So, we can find a rationally $4$--periodic submanifold, which induces a periodicity on the ambient manifold.

\begin{proof}[Proof of Theorem \ref{thmA}]
The proof combines the various aspects from topology, geometry and symmetry that we developped so far.  

First, we set up the proof. The statement is obviously true, if the dimension is not divisible by four. The first coefficient of the power series $\phi_0(M)$ is the \mbox{$\hat{A}$--genus} and the statement is true by the Lichnerowicz Theorem for dimensions $n \leq 12$, even without any symmetry condition. So let $n \geq 16$. We work under the assumption that the first $\min \{ \left \lfloor \frac{n}{16} \right \rfloor +1, 2^{s-3} \}$ coefficients of $\phi_0(M)$ do not vanish and we show that $M$ is rationally $4$--periodic.

Under this assumption, we conclude  with Theorem \ref{thm_HS} that any involution $\sigma \in T^s$ has a connected fixed point component $F \subset M^{\sigma}$ with
$$ \cod_{M} F \leq 4 \left \lfloor \frac{n}{16} \right \rfloor \leq \frac{n}{4} \quad \textrm{and} \quad \cod_{M} F < 2^{s-1}.
$$
Furthermore, we may assume that the action is even and so the codimension $\cod_{M}F$ is divisible by four. The torus action  is isometric and so all connected components of $M^{\sigma}$ are totally geodesic submanifolds. Hence, the Intersection Theorem \ref{Frankel} restricts the codimensions of fixed point components $\tilde{F} \subset M^{\sigma}$ of involutions to
$$ \cod_M \tilde{F} \; \leq \; \frac{n}{4} \quad \textrm{or} \quad \cod_M \tilde{F} \; \geq \; \frac{3}{4} \; n +4.
$$
Since $\phi_0(M)$ is non-zero, there exists a torus fixed point $\textit{pt} \in M^T$. As before, we describe the symmetry around $\textit{pt}$ in terms of a linear code $\code$. As such, the code has length $\frac{n}{2}$ and is of rank $s$. Let $F(\sigma)$ be again the fixed point component of the involution $\sigma \in T^s$ around \textit{pt} and let $\tilde{\sigma} \in \code$ be the associated codeword. Following the fixed point configuration, we obtain
$$ \textrm{wt}(\tilde{\sigma}) \; \leq \; \frac{n}{8} \quad \textrm{or} \quad \textrm{wt}(\tilde{\sigma}) \;  \geq \;  \frac{3}{8} \; n+2.
$$

Next, we will see that there is a subgroup $\Z_2^{s-1} \subset T^s$, for which all involutions have $\cod_M F(\sigma) \leq \frac{n}{4}$. This is equivalent to showing that the subcode
$$ \code' := \left \{ \tilde{\sigma} \in \code \; \vert \; \textrm{wt}(\tilde{\sigma}) \leq \frac{n}{8} \right \}
$$
has at least rank $s-1$.  It is clear that $\code'$ is closed under addition and therefore, yields a linear subspace of $\code$. In order to get an estimate on the rank of $\code'$, we choose a linear code $\code''$ such that $\code$ is the direct sum of $\code'$ and $\code''$. 
Let $c_1, c_2 \in \code''$ be two non-zero codewords. By definition of $\code'$, we easily see that
$$ \textrm{wt}(c_1 +c_2) \;< \; \frac{3}{8} \; n+2.
$$
Subsequently, we have that $c_1 + c_2 \notin \code''$ and so $\code''$ has at most rank one. It follows that the rank of $\code'$ is at least $s-1$.  We conclude that a subgroup $\Z_2^{s-1} \subset T^s$ acts isometrically  on $M$ such that
$$\cod_M F(\sigma) \leq \frac{n}{4} \quad \textrm{and} \quad \cod_M F(\sigma) < 2^{s-1}
$$
for any involution $\sigma \in \Z_2^{s-1}$.

We are now in the fortunate position to use Lemma \ref{4per_fam}. There exists a family of totally geodesic submanifolds
$$ N_{s-1} \subseteq N_{s-2} \subseteq \ldots \subseteq N_1 \subset M
$$
with $N_r$ a rationally $4$--periodic manifold for some $r \in \{0, \ldots, s-1 \}$. Moreover, the codimensions satisfy
$$ \cod_{N_i} N_{i+1} \leq \frac{1}{2^i} \; \cod_M N_1 \leq \frac{n}{2^{i+2}}.
$$
In particular, we have that $\dim N_i \geq \frac{n}{2} \geq 8$ for each $i \in \{ 1, \ldots, s-1 \}$.

We finish the proof by applying successively Lemma \ref{key lemma} to our family of totally geodesic submanifolds. Since $N_r$ is rationally $4$--periodic and
$$ 4 \cdot \cod_{N_{r-1}} N_r \leq \frac{n}{2^{r-1}} \leq \frac{n}{2} \leq \dim N_{r-1} \quad \textrm{for} \; r \geq 2,
$$ 
we conclude that $N_{r-1}$  is also rationally $4$--periodic.  Proceeding inductively, we note eventually that $N_1$ is rationally $4$--periodic. Since the codimension satisfies $\cod_{M} N_1 \leq \frac{n}{4}$, Lemma \ref{key lemma} finally implies that $M$ is rationally \mbox{$4$--periodic}. This marks the end of the proof.
\end{proof}

The proof of Theorem \ref{thmB} is centered around the same ideas as the proof of Theorem \ref{thmA}.

\begin{proof}[Proof of Theorem \ref{thmB}]
The setup is essentially the same as before. The second statement is true for $n \leq 8$ by the Lichnerowicz Theorem and for $n \leq 20$ by Dessai's work \cite{Des07}. Let $n \geq 24$ and assume that the first $\min \{ \left \lfloor \frac{n}{12} \right \rfloor +1, 2^{s-3} \}$ coefficients of $\phi_0(M)$ do not vanish.  We may assume that the action is even.

Using Theorem \ref{thm_HS}, we obtain restrictions on the fixed point set of involutions. Indeed, any involution $\sigma \in T^s$ has a fixed point component $F \subset M^{\sigma}$ such that
$$
\cod_{M} F \leq \frac{n}{3} \quad  \textrm{and} \quad \cod_M F < 2^{s-1}.
$$

We will show that there exists an involution $ \sigma_1 \in T^s$ such that $F \subset M^{\sigma_1}$ is rationally $4$--periodic with $\cod_M F \leq \frac{n}{3}$. The non-vanishing of the elliptic genus ensures that the fixed point set $ M^T$ is non-empty. We take a torus fixed point $\textit{pt} \in M^T$. As in the proof of Theorem \ref{thmA}, we use the Intersection Theorem  to show that there is a subgroup $\Z_2^{s-1} \subset T^s$, for which all involutions $\sigma \in \Z_2^{s-1}$ have $\cod_M F(\sigma) \leq \frac{n}{3}$ and $\cod_M F(\sigma) < 2^{s-1}$ around the fixed point $\textit{pt}$. Hence, we can apply Lemma \ref{4per_fam} to exhibit a family of totally geodesic submanifolds 
$$ N_{s-1} \subseteq N_{s-2} \subseteq \ldots \subseteq N_1 \subset M
$$
with $N_r$ a rationally $4$--periodic manifold for some $r \in \{ 0, \ldots, s-1 \}$ and with the property that
\begin{equation} \label{cods}
\cod_{N_i} N_{i+1} \; \leq \; \frac{1}{2^i} \; \cod_M N_1 \; \leq \; \frac{n}{3 \cdot 2^{i}}.
\end{equation}
In particular, we have that $\dim N_i \geq \frac{n}{3} \geq 8$ for each $i \in \{ 1, \ldots, s-1 \}$.

In order to invoke Lemma \ref{key lemma}, we need to check the codimensions more precisely.  From (\ref{cods}), we remark that
$$ 4 \cdot \cod_{N_1} N_2 \; \leq \frac{2}{3} n \leq \dim N_1
$$
and also that
$$ 4 \cdot\cod_{N_i} N_{i+1} \leq \frac{n}{3 \cdot 2^{i-2}} \leq \frac{n}{3}  \leq \dim N_i \quad \textrm{for} \; i \geq 2.
$$

We conclude that the codimensions are small enough to use Lemma \ref{key lemma}. As a result, the $4$--periodicity on $N_r$ induces a $4$--periodicity on $N_1$. We recall that by construction, $N_1$ is the fixed point component attached to some involution $\sigma_1 \in T^s$. Since the action is even,   $\dim N_1$ is divisible by four and hence, $N_1$ is a rational cohomology sphere, complex or quaternionic projective space.

We rule out the case of $N_1$ being a rational cohomology sphere. Let $N_1 =F(\sigma_1)$ be a rational cohomology sphere with $\cod_M N_1 \leq \frac{n}{3}$. By the Lefschetz fixed point formula, the elliptic genus can be computed in terms of the fixed point set $M^{\sigma_1}$, where the Euler classes of the normal bundles factor out.  The Intersection Theorem implies, however, that $M^{\sigma_1}$ consists  of the cohomology sphere $N_1$ and of components of small dimension and consequently, the Euler classes of the normal bundles vanish. Therefore,  the elliptic genus vanishes, which is a contradiction.

We use the Connectedness Lemma and its implications to wrap up the proof. The inclusion map $N_1 \hookrightarrow M$ is $(\frac{n}{3}+1)$--connected. We fix $k_1 := \cod_{M} N_1$ to be the codimension of $N_1$ in $M$, which is divisible by four. We note that there exists a class $e \in H^{k_1}(M ; \Q)$ such that multiplication
\begin{equation} \label{thmB_2}
\cup e : H^{i}(M; \Q) \rightarrow H^{i+k_1}(M;\Q)
\end{equation} 
is an isomorphism for $k_1 \leq i \leq n-2k_1$.

We recall that the inclusion map is highly connected and that $N_1$ is rationally $4$--periodic. So we choose $x \in H^{4}(M; \Q)$ to be a generator and note that $e = x^{k_1/4}$ up to some non-zero constant. Finally, by (\ref{thmB_2}) there exists an integer $m \in \N$ such that $e^m \neq 0 \in H^{m \cdot k_1}(M; \Q)$ with $m \cdot k_1 \geq \frac{2}{3}n$ and so that $x^{m \cdot k_1 /4}$ generates $H^{m \cdot k_1}(M; \Q)$. By the cup product version of Poincar\'e duality, we have that $x^{n/4}$ is non-zero, which is exactly the content of first statement.
\end{proof}

\subsection{Proof of Theorem \ref{thmC}}

It follows from the celebrated Smith theory that a smooth circle action on a sphere $S^{2n}$ comes either with a connected fixed point set or with a pair of isolated fixed points. In this subsection we study these  fixed point setups in perspective of positive curvature and the elliptic genus.

For the proof of Theorem \ref{thmC}, we proceed in a similar way as for the previous theorems. The fixed point configurations are particularly nice, since the torus fixed point set is connected. Eventually, we show that the elliptic genus is constant via the fixed point formula.

\begin{proof}[Proof of Theorem \ref{thmC}] 
The proof goes by contradiction. Our symmetry assumption implies that we have a torus $T^s$ acting isometrically on $M^n$ with $n \leq 2^s$. We suppose that the elliptic genus is not constant and without loss of generality, we may assume that the dimension of $M$ is divisible by four and that the action is even. Since the torus fixed point set $M^T$ is connected, we have that $M^{\sigma}$ is also connected for each involution $\sigma \in T^s$. For if $X_1, X_2 \subset M^{\sigma}$ were two fixed point components, with say $M^T \subset X_1$, then the induced torus action  on $X_2$ would not have any fixed points. However, $X_2$ is positively curved and even-dimensional. So, we obtain a contradiction with Berger's Lemma \cite{Ber66}. Subsequently, $M^{\sigma}$ is also connected.

Next, we conclude by Theorem \ref{thm_HS} that for any involution $\sigma \in T^s$ the fixed point set $F(\sigma) = M^{\sigma}$ satisfies
$$ \cod_{M} F(\sigma) \leq \frac{n}{2} - 2 < 2^{s-1}.
$$
We pick a fixed point $\textit{pt} \in M^T$. In view of the fixed point configuration, we apply Lemma \ref{4per_fam} to obtain a family of totally geodesic submanifolds
$$ N_{s-1} \subseteq \ldots \subseteq N_1 \subset N_0 = M
$$
such that $N_r$ is a rationally $4$--periodic manifold for some $r \in \{ 0, \ldots, s-1 \}$.  By construction, we have $N_1 =F(\sigma_1)$ for an involution $\sigma_1 \in T^s$.

We now compute the elliptic genus $\phi_0(M)$ localized at $N_1$.  First, we fix the dimensions to be $\dim M = 4k$ and $\dim N_1 = 4l$. By the Lefschetz fixed point formula, the elliptic genus may be calculated in terms of the fixed point set $M^{\sigma_1}$. Moreover, we recall that in this formula the Euler class $e(\nu) \in H^{4k-4l}(N_1; \Q)$ of the normal bundle factors out. We show that the Euler class $e(\nu)$ vanishes.

It follows from the Connectedness Lemma that the inclusion map $N_1 \hookrightarrow M$ is   $(8l-4k+1)$--connected. By the second part of Lemma \ref{4per_fam}, we observe that the inclusion maps $N_{i+1} \hookrightarrow N_i$ are at least $(8l-4k+1)$--connected for $i \in \{0, \ldots, s-2 \}$. Moreover, we note that $(8l-4k+1) \geq 5$ and therefore, we get that
$$b_4(M) = b_4(N_r)=0.
$$
Since $N_r$ is rationally $4$--periodic and $\dim N_r > 8l-4k$, we conclude with Poincar\'e duality
$$ b_{4k-4l}(N_1) = b_{8l-4k}(N_1) = b_{8l-4k}(N_r) = b_{4}(N_r) = 0.
$$
Therefore, the Euler class $e(\nu) \in H^{4k-4l}(N_1; \Q)$ vanishes and so does the elliptic genus. Thus, we established a contradiction.
\end{proof}

We finish this paper with the proof of Corollary \ref{corC}.

\begin{proof}[Proof of Corollary \ref{corC}] In view of the result by Amann and Kennard \cite[\textit{\mbox{Theorem A}}]{AK14a} it suffices to show that the elliptic genus vanishes, when the torus fixed point set consists of two isolated fixed points or is connected and has the rational cohomology of a sphere. We distinguish these two cases.

If the torus fixed point set consists of two isolated fixed points $p$ and $q$, then the induced representations at the tangent spaces $T_p M$ and $T_q M$ are isomorphic. This follows from the Lefschetz fixed point formula and was highlighted by Atiyah-Bott \cite[\textit{Theorem 7.15, p.476}]{AB68} in a more general setting. By using, for example, the localization formula from Atiyah-Bott, it can further be shown that $M$ is rationally zero-bordant. As a consequence, the elliptic genus vanishes.

On the other hand, if the torus fixed point set is connected, then \mbox{Theorem \ref{thmC}} implies that the elliptic genus is constant as a power series and therefore it equals the signature. However, the signature vanishes, since the fixed point set is a cohomology sphere. This concludes the proof.
\end{proof}

\bibliographystyle{plain}
\bibliography{references}

\end{document}